\newcommand{\pn}{\par\noindent}
\newcommand{\pmn}{\par\medskip\noindent}
\newcommand{\pbn}{\par\bigskip\noindent}
\newtheorem{theor}{Theorem}[section]
\newtheorem{prop}{Proposition}[section]
\theoremstyle{definition} 
\newtheorem{defin}{Definition}[section]
\newtheorem{ex}{Example}[section] \theoremstyle{remark}
\newtheorem{rem}{Remark}[section]
\begin{document}
\title{On arithmetic of one class of plane maps}
\author{Yury Kochetkov}
\begin{abstract} We study bipartite maps on the plane with one
infinite face and one face of perimeter 2. At first we consider
the problem of their enumeration and then the connection between
the combinatorial structure of a map and the degree of its
definition field. The second problem is considered when the number
of edges is $p+1$, where $p$ is prime.
\end{abstract} \email{yukochetkov@hse.ru, yuyukochetkov@gmail.com}
\maketitle

\section{Introduction}
\pn We will study connected maps in the plane with two faces
--- one infinite, another --- of perimeter 2. Such maps can be
obtained from a plane trees by doubling one edge. For example,
\[\begin{picture}(250,40) \multiput(0,15)(20,0){6}{\circle*{3}}
\put(0,15) {\line(1,0){100}} \put(40,35){\circle*{3}}
\put(40,15){\line(0,1){20}} \put(120,12){$\Rightarrow$}

\multiput(150,15)(20,0){6}{\circle*{3}}
\put(150,15){\line(1,0){60}} \put(230,15){\line(1,0){20}}
\put(190,35){\circle*{3}} \put(190,15){\line(0,1){20}}
\qbezier(210,15)(220,30)(230,15) \qbezier(210,15)(220,0)(230,15)
\end{picture}\] All our maps will be bipartite: vertices are white
and black and adjacent vertices have different colors. The map
above generates two bipartite maps:
\[\begin{picture}(300,40) \put(0,15){\circle{4}}
\put(20,15){\circle*{3}} \put(40,15){\circle{4}}
\put(40,35){\circle*{3}} \put(60,15){\circle*{3}}
\put(80,15){\circle{4}} \put(100,15){\circle*{3}}
\put(2,15){\line(1,0){36}} \put(40,17){\line(0,1){18}}
\put(42,15){\line(1,0){18}} \put(82,15){\line(1,0){18}}
\qbezier(60,15)(70,30)(79,16) \qbezier(60,15)(70,0)(79,14)

\put(127,12){and}

\put(170,15){\circle*{3}} \put(190,15){\circle{4}}
\put(210,15){\circle*{3}} \put(210,35){\circle{4}}
\put(230,15){\circle{4}} \put(250,15){\circle*{3}}
\put(270,15){\circle{4}} \put(170,15){\line(1,0){18}}
\put(192,15){\line(1,0){36}} \put(210,15){\line(0,1){19}}
\put(250,15){\line(1,0){18}} \qbezier(231,16)(240,30)(250,15)
\qbezier(231,14)(240,0)(250,15)
\end{picture}\] In what follows such maps will be called
$2^1$-maps, i.e. maps with one face of perimeter 2 (and one
infinite face). \pmn Let $D$ be a $2^1$-map and $k_1,k_2,\ldots$
($l_1,l_2,\ldots$) be numbers of its white (black) vertices of
degrees 1, 2, and so on. The \emph{passport} of the map $D$ is an
expression
$$a_1^{k_1}a_2^{k_2}\cdot\ldots\cdot b_1^{l_1}b_2^{l_2}\cdot
\ldots,$$  where $a_1,a_2,\ldots$ and $b_1,b_2,\ldots$ are formal
variables. Thus, maps above have passports
$$a_1a_3^2b_1^2b_2b_3\text{ and } a_1^2a_2a_3b_1b_3^2.$$ In
Section 2 we will construct a generating function for $2^1$-maps
in terms of passports. \pmn Beginning from Section 3 we will
assume that $2^1$-map $D$ has $p+1$ edges, where $p$ is prime.
\begin{rem} We follow the work \cite{Zapponi}, where arithmetic of
plane bipartite trees with prime number of edges is studied.
\end{rem} \pn
Let $\beta$ be a Belyi function for $D$, $K$
--- its big field of definition ($K$ contains coordinates of all
vertices),  $\rho$ --- a prime divisor in $K$, that divides $p$,
$v_\rho$ --- the corresponding valuation and $I\subset K$ --- the
ideal of elements with positive $v_\rho$ valuation. Function
$\beta$ is called a {\it normalized model} for $2^1$-map $D$, if
\begin{itemize}
    \item some white vertex of degree $>1$ is at $0$ and some
    black vertex of degree $>1$ is at $1$;
    \item coordinates of all white vertices, except, maybe, one of
    degree 1, belong to $I$;
    \item coordinates of all black vertices, except, maybe, one of
    degree 1, belong to the set $1+I$.
\end{itemize} If besides the value of $\beta$ in white vertices is $0$
and in black --- $1$, then $\beta$ will be called a \emph{totally
normalized model}.
\begin{rem} In work \cite{Zapponi} a Shabat $p$ polynomial for a tree
$T$ is called normalized, if
\begin{itemize}
    \item some white vertex is in zero and some black --- in $1$;
    \item coordinates of all white vertices belong to $I$;
    \item coordinates of all black vertices belong to the set
$1+I$.
\end{itemize} \end{rem}
\pn In Section 3 the existence theorem (Theorem 3.1) is proved:
each $2^1$-map with $p+1$ edges has a normalized model. \pmn
Arithmetic of normalized model is investigated in Section 5. Let a
$2^1$-map $D$ has $n+1$ white vertices $v_0,\ldots,v_n$ with
coordinates $x_0,\ldots,x_n$ and $m+1$ black vertices
$u_0,\ldots,u_m$ with coordinates $y_0,\ldots,y_m$. Let
$e=v_\rho(p)$ be the ramification index. The following statement
holds (Theorem 5.1): let $\beta$ be a normalized model, then there
are three cases.
\begin{itemize}
    \item Coordinate $x_s$ of one white vertex $v_s$ of
    degree $1$ does not belong to $I$. In this case
    $v_\rho(x_i)=e/(n-1)$, $i\neq s$, and
    $v_\rho(x_i-x_j)=e/(n-1)$, $i,j\neq s$.
    \item $v_\rho(x_i)=e/(n+1)$ for all $i$ and
    $v_\rho(x_i-x_j)=e/(n+1)$ for all $i\neq j$.
    \item $v_\rho(x_i)=l>0$ for all white vertices, except one
    $v_s$ of degree $1$. Here $v_\rho(x_s)=k>0$, $k<l$,
    $l=(e-2k)/(n-1)$, and $v_\rho(x_i-x_j)=l$ for $i,j\neq s$.
\end{itemize}
An analogous result is valid for black vertices.
\begin{rem} In work \cite{Zapponi} the result is as follows: if
a tree $T$ with prime number of edges has $n+1$ white vertices and
$p$ is its normalized polynomial, then
\begin{itemize}
    \item $v_\rho(x_i)=e/n$ for all $i$;
    \item $v_\rho(x_i-x_j)=e/n$ for all $i,j$, $i\neq j$.
\end{itemize} \end{rem} \pn
\textbf{Corollary.} If vertices $v_i$ and $v_j$ both have degree
$>1$ then
$$v_\rho(x_i-x_j)=e/(n-1)\text{ or } e/(n+1) \text{ or }
(e-2k)/(n-1)>k.$$ In Section 6 (Theorem 6.1) we prove the
existence and the uniqueness of the \emph{canonical model}, i.e.
Belyi function $\beta$ with the following properties:
\begin{itemize}
    \item the value of $\beta$ in white vertices is $0$
     and in black --- $1$;
    \item the sum of coordinates of white vertices of degree $>1$
    is $0$;
    \item the sum of coordinates of black vertices of degree $>1$
    is $1$.
\end{itemize}
\begin{rem} In \cite{Zapponi} the canonical model is such Shabat
polynomial that: the sum of coordinates of \emph{all} white
vertices is $0$ and the sum of coordinates of \emph{all} black
vertices is $1$. \end{rem} We obtain the canonical model from a
normalized model with the use of some coordinate change that
preserves the big definition field and the following property: if
vertices $v_i$ and $v_j$ both have degree $>1$ then
$$v_\rho(x_i-x_j)=e/(n-1)\text{ or } e/(n+1) \text{ or }
(e-2k)/(n-1)>k.$$ The field of definition $L\subset K$ of a map
$D$ coincides with the field of definition of its canonical model.
Let $\tau$ be a prime divisor in $L$ that divide $p$ and is
divided by $\rho$, $v_\tau$ be the corresponding valuation in $L$
and $e_\tau=v_\tau(p)$ --- ramification index. \pmn Let $k$ be the
maximal degree of white vertices of map $D$ and $s_i$,
$i=1,\ldots,k$, --- the number of white vertices of degree $i$.
Let
$$s=\text{gcd}(\,s_is_j,\, 2\leqslant i<j\leqslant k,\,
s_i(s_i-1),\, 2\leqslant i\leqslant k).$$ In our case (number of
edges is $p+1$) we can estimate (Theorem 7.1) the degree of field
$L$:
\begin{itemize}
    \item either $se_\tau/(n-1)$ is integer;
    \item or $se_\tau/(n+1)$ is integer;
    \item or $s(e_\tau-2k)/(n-1)$ is integer and $>k$.
\end{itemize}
\begin{rem} In \cite{Zapponi}
$$s=\text{gcd}(\,s_is_j,\, 1\leqslant i<j\leqslant k,\,
s_i(s_i-1),\, 1\leqslant i\leqslant k)$$ and $se_\tau/n$ is
integer. Thus, Zapponi obtained better estimations on degree of
definition field of a plane tree with prime number of edges.
\end{rem}
\begin{rem} We will need the following result from
algebraic number theory (see \cite{Bor}). Let $K$ be an algebraic
field of degree $n$, $p$ --- a prime number,
$\rho_1,\ldots,\rho_m$ --- all prime divisors in $K$, that divide
$p$, i.e.
$$p=c\cdot\rho_1^{e_1}\cdot\ldots\cdot \rho_m^{e_m},$$ where $c$
is a unit and $e_i$ --- ramification indices. Then
$$n=\sum_{i=1}^m e_in_i, \eqno(1.1)$$ where $n_i$ --- inertia
indices.\end{rem} The case of passport $a_1^4a_2^2b_1b_7$ is
considered in Example 7.1. We have five maps with this passport.
Here $n+1=6$ and $s=2$. Thus, either $e_\tau$ is even or $e_\tau$
is divisible by $3$. As degree of definition field is $\leqslant
5$, we have two possibilities: either we have two Galois orbits of
cardinality 2 and 3, or number $7$ is the product of two prime
divisors $7=c\rho_1^2\rho_2^3$ ($c$ is a unit). Actually, the
first possibility holds.

\section{Enumeration of $2^1$ maps}
\pn A $2^1$-map can be obtained by joining a white vertex of some
tree $T_1$ with black vertex of another tree $T_2$ by a double
edge:
\[\begin{picture}(275,50) \put(0,25){\circle*{3}}
\put(20,5){\circle*{3}} \put(20,45){\circle*{3}}
\put(20,25){\circle{4}} \put(0,25){\line(1,0){18}}
\put(20,5){\line(0,1){18}} \put(20,27){\line(0,1){18}}
\put(50,22){$+$} \put(90,25){\circle*{3}}\put(125,10){\circle*{3}}
\put(110,25){\circle{4}} \put(90,25){\line(1,0){18}}
\put(125,10){\circle*{3}} \put(125,40){\circle*{3}}
\put(111,26){\line(1,1){14}} \put(111,24){\line(1,-1){14}}
\put(150,22){$=$}

\put(190,25){\circle*{3}} \put(210,25){\circle{4}}
\put(190,25){\line(1,0){18}} \put(195,10){\circle*{3}}
\put(195,40){\circle*{3}} \put(195,10){\line(1,1){14}}
\put(195,40){\line(1,-1){14}} \put(240,25){\circle*{3}}
\put(260,25){\circle{4}} \put(240,25){\line(1,0){18}}
\put(275,10){\circle*{3}} \put(275,40){\circle*{3}}
\put(261,26){\line(1,1){14}} \put(261,24){\line(1,-1){14}}
\qbezier(211,26)(225,40)(240,25) \qbezier(211,24)(225,10)(240,25)
\end{picture}\] At first we will explain how plane bipartite trees
were enumerated in the work \cite{Goul}. \pmn Let $M$ be the set
of all plane bipartite trees with the given passport
$P=a_1^{k_1}a_2^{k_2}\ldots\\ b_1^{l_1}b_2^{l_2}\ldots$. Here
$n=k_1+k_2+\ldots$ and $m=l_1+l_2+\ldots$ are numbers of white and
black vertices, respectively. Let $\#\text{Aut}(T)$ be the order
of the group of automorphisms of \emph{plane bipartite} tree $T$,
then
$$\sum_{T\in M}\frac{1}{\#\text{Aut}(T)}=
\frac{(n-1)!\,(m-1)!}{\overset{\vspace{1mm}}{\prod_i
(k_i)!\,\prod_j (l_j)!}}\,.$$
\begin{ex} There are two trees with passport
$P=a_1^2a_4b_1^2b_2^2$:
\[\begin{picture}(160,60) \put(0,30){\circle{4}}
\put(15,30){\circle*{3}} \put(35,30){\circle{4}}
\put(35,50){\circle*{3}} \put(35,10){\circle*{3}}
\put(55,30){\circle*{3}} \put(70,30){\circle{4}}
\put(2,30){\line(1,0){31}} \put(35,32){\line(0,1){18}}
\put(35,28){\line(0,-1){18}} \put(37,30){\line(1,0){31}}

\put(90,27){and}

\put(120,45){\circle*{3}} \put(120,15){\circle*{3}}
\put(135,30){\circle{4}} \put(134,31){\line(-1,1){14}}
\put(134,29){\line(-1,-1){14}} \put(150,15){\circle*{3}}
\put(150,45){\circle*{3}} \put(160,5){\circle{4}}
\put(160,55){\circle{4}} \put(136,31){\line(1,1){23}}
\put(136,29){\line(1,-1){23}}
\end{picture}\] The group of automorphisms of the left tree has
order 2, the group of automorphisms of the right tree is trivial.
Thus
$$1+\frac 12=\frac 32= \frac{2!\,3!}{2!\,2!\,2!}\,$$
\end{ex} \pmn
Let $K$ be the set of infinite sequences of nonnegative integers
$k=(k_1,k_2,\ldots)$ such, that only finite number of $k_i$ are
positive. Let $|k|=\sum_{i=1}^\infty k_i$,
$||k||=\sum_{i=1}^\infty i\cdot k_i$, $k!=\prod_{i=1}^\infty
(k_i)!$ and $a^k=\prod_{i=1}^\infty a_i^{k_i}$, where $a_i$ are
formal variables. Let us consider series
$$A(a,t,x)=\sum_{k\in K}\frac{(|k|-1)!\,a^k}{k!}\cdot t^{|k|}
x^{||k||}=\sum_{n=1}^\infty \frac 1n\cdot(t\cdot x\cdot a_1+
t\cdot x^2 \cdot a_2+t\cdot x^3\cdot a_3+\ldots)^n$$ and
analogously defined series
$$B(b,t,y)=\sum_{l\in K} \frac{(|l|-1)!\,b^l}{l!} \cdot t^{|l|} y^{|| l||}=
\sum_{m=1}^\infty \frac 1m\cdot (t\cdot y\cdot b_1+t\cdot y^2\cdot
b_2+ t\cdot y^3\cdot b_3+\ldots)^m.$$ Now we can construct the
generating function $T(a,b,t,x)$ for the set of plane bipartite
trees: from the product $A(a,t,x)\cdot B(b,t,y)$ we delete those
terms, where ${\rm deg}(x)\neq{\rm deg}(y)$ or ${\rm deg}(x)\neq
{\rm deg}(t)-1$ and add terms $a_0$ and $b_0$ that correspond to
trees with one vertex (white or black). \pmn Let $D_a$ and $D_b$
be differential operators:
$$\begin{array}{c}D_a=a_2x\cdot \dfrac{\partial}{\partial
a_0}+a_3x\cdot\dfrac{\partial}{\partial a_1}+2a_4x\cdot
\dfrac{\partial}{\partial a_2}+3a_5x\cdot
\dfrac{\partial}{\partial a_3}+\ldots\\ \\ \text{and}\\ \\
D_b=b_2x\cdot \dfrac{\partial}{\partial
b_0}+b_3x\cdot\dfrac{\partial}{\partial b_1}+2b_4x\cdot
\dfrac{\partial}{\partial b_2}+3b_5x\cdot
\dfrac{\partial}{\partial b_3}+\ldots\end{array}$$
\begin{theor} Let $M(a,b,t,x)$ be the generating function of
$2^1$-maps, then
$$M(a,b,t,x)=D_a(T)\cdot D_b(T).$$ \end{theor}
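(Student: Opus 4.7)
The plan is to set up a bijection between $2^1$-maps and certain sextuples (pointed tree, pointed tree, insertion data), and then check that the $1/|\mathrm{Aut}|$-weights agree.

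First, for a $2^1$-map $D$ the $2$-face is bounded by the unique double edge; removing its two edges disconnects $D$ into a pair of bipartite plane trees $T_1,T_2$, with $T_1$ containing the white endpoint $w$ and $T_2$ the black endpoint $b$, and with $\deg_{T_1}(w)=\deg_D(w)-2$, $\deg_{T_2}(b)=\deg_D(b)-2$. Conversely, from $(T_1,w,T_2,b)$ one reconstructs $D$ by inserting a double edge between $w$ and $b$; this requires a choice of cyclic ``gap'' at $w$ (one of $\max(\deg_{T_1}(w),1)$ options) and likewise at $b$.

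Next, I would expand $D_a T$ by differentiating term by term. Summing over (isomorphism classes of) trees $T$ with passport $(k,l)$ and over the white vertices $w$ of $T$, one obtains
$$D_a T = \sum_T \frac{1}{|\mathrm{Aut}(T)|}\sum_{w}c_{\deg(w)}\,a^{k-e_{\deg(w)}+e_{\deg(w)+2}}\,b^{l}\,t^{|k|+|l|}\,x^{\|k\|+1},$$
and the numerical coefficient $c_{\deg(w)}$ ($=\deg(w)$ when $\deg(w)\ge 1$, and $=1$ when $\deg(w)=0$) is exactly the number of insertion gaps at $w$ described in the bijection. Hence $D_aT$ is the generating function of triples (tree, marked white vertex, insertion slot) with the marked vertex's passport entry already shifted from $a_{\deg(w)}$ to $a_{\deg(w)+2}$ and with one extra $x$ counting one of the two new edges. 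The analogous statement holds for $D_bT$ on the black side.

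Multiplying the two series, $(D_aT)(D_bT)$ becomes the enumeration of sextuples $(T_1,w,p_w,T_2,b,p_b)$ weighted by $1/(|\mathrm{Aut}(T_1)|\,|\mathrm{Aut}(T_2)|)$; by the bijection each such sextuple corresponds to a unique $2^1$-map $D$ with the correct passport and with $t^{|V(D)|}x^{|E(D)|}$ (the double edge contributes $x^2$ and no new vertices). To identify the resulting series with $M=\sum_D a^{k(D)}b^{l(D)}t^{|V|}x^{|E|}/|\mathrm{Aut}(D)|$ I would argue that every automorphism of $D$ must fix the unique double edge, and so restricts to an automorphism of $T_1$ fixing the pair $(w,p_w)$ together with an automorphism of $T_2$ fixing $(b,p_b)$; conversely any such pair lifts to an automorphism of $D$. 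The orbit--stabilizer lemma then converts the sum of $1/(|\mathrm{Aut}(T_1)|\,|\mathrm{Aut}(T_2)|)$ over representatives of $(T_1,w,p_w,T_2,b,p_b)$ giving the same $D$ into $1/|\mathrm{Aut}(D)|$ per map.

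The delicate step is the automorphism bookkeeping: one must verify cleanly that $\mathrm{Aut}(D)\cong\mathrm{Aut}(T_1,w,p_w)\times\mathrm{Aut}(T_2,b,p_b)$, so that the slot-multiplicities $c_{\deg(w)},c_{\deg(b)}$ combine with the orbit--stabilizer conversion to produce exactly $1/|\mathrm{Aut}(D)|$ rather than some other multiple. Once this is in place, equating coefficient by coefficient yields the theorem.
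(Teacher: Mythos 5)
Your proof follows essentially the same route as the paper's: decompose a $2^1$-map by deleting its unique double edge into two pointed plane trees, match the number of insertion gaps at the marked white and black vertices with the coefficients of $D_a$ and $D_b$, and account for automorphisms. The step you flag as delicate is settled in the paper simply by observing that $\mathrm{Aut}(D)$ is trivial (and, implicitly, that an automorphism of a plane tree fixing a vertex together with a gap at it is the identity), so all weights collapse to $1$ and the bijection gives the identity of generating functions directly.
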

\begin{proof} We connect a white vertex $v$, ${\rm deg}(v)=k$,
of one tree with a black vertex $u$, ${\rm deg}(u)=l$ of another
by the double edge. There are $kl$ ways to draw this double edge.
This procedure increases the degree of $v$ by 2 and the degree of
$u$ by two. It remains to note that the group of automorphisms of
a $2^1$-map is trivial.\end{proof}
\begin{ex} Terms of $M(a,b,t,x)$ with ${\rm deg}(x)={\rm deg}(t)
=5$ are as follows:
$$\begin{array}{l}
(a_5b_2b_1^3+a_2a_1^3b_5+a_4a_1b_3b_1^2+a_3a_1^2b_4b_1+
2a_4a_1b_2^2b_1+\\ \hspace{1cm}+2a_2^2a_1b_4b_1+
2a_3a_2b_3b_1^2+2a_3a_1^2b_3b_2+
a_3a_2b_2^2b_1+a_2^2a_1b_3b_2)x^5t^5\end{array}$$ We see that
there are fourteen $2^1$-maps with 5 edges. Here they are:
\[\begin{picture}(300,55) \put(0,35){\circle*{3}}
\put(5,20){\circle*{3}} \put(5,50){\circle*{3}}
\put(20,35){\circle{4}} \put(0,35){\line(1,0){18}}
\put(5,20){\line(1,1){14}} \put(5,50){\line(1,-1){14}}
\put(50,35){\circle*{3}} \qbezier(21,36)(35,55)(50,35)
\qbezier(21,34)(35,15)(50,35) \put(15,2){$a_5b_1^3b_2$}

\put(80,35){\circle{4}} \put(110,35){\circle*{3}}
\qbezier(81,36)(95,55)(110,35) \qbezier(81,34)(95,15)(110,35)
\put(125,50){\circle{4}} \put(130,35){\circle{4}}
\put(125,20){\circle{4}} \put(110,35){\line(1,1){14}}
\put(110,35){\line(1,0){18}} \put(110,35){\line(1,-1){14}}
\put(90,2){$a_1^3a_2b_5$}

\put(160,20){\circle*{3}} \put(160,50){\circle*{3}}
\put(175,35){\circle{4}} \put(160,20){\line(1,1){14}}
\put(160,50){\line(1,-1){14}} \put(205,35){\circle*{3}}
\qbezier(176,36)(190,55)(205,35) \qbezier(176,34)(190,15)(205,35)
\put(220,35){\circle{4}} \put(205,35){\line(1,0){13}}
\put(175,2){$a_1a_4b_1^2b_3$}

\put(240,35){\circle*{3}} \put(255,35){\circle{4}}
\put(240,35){\line(1,0){13}} \put(285,35){\circle*{3}}
\qbezier(256,36)(270,55)(285,35) \qbezier(256,34)(270,15)(285,35)
\put(300,20){\circle{4}} \put(300,50){\circle{4}}
\put(285,35){\line(1,1){14}} \put(285,35){\line(1,-1){14}}
\put(250,2){$a_1^2a_3b_1b_4$}
\end{picture}\]

\[\begin{picture}(300,60)
\put(0,10){\circle{4}} \put(10,20){\circle*{3}}
\put(10,50){\circle*{3}} \put(25,35){\circle{4}}
\put(1,11){\line(1,1){23}} \put(10,50){\line(1,-1){14}}
\put(55,35){\circle*{3}} \qbezier(26,37)(40,55)(55,35)
\qbezier(26,34)(40,15)(55,35) \put(15,2){$a_1a_4b_1b_2^2$}

\put(80,60){\circle{4}} \put(90,20){\circle*{3}}
\put(90,50){\circle*{3}} \put(105,35){\circle{4}}
\put(81,59){\line(1,-1){23}} \put(90,20){\line(1,1){14}}
\put(135,35){\circle*{3}} \qbezier(106,37)(120,55)(135,35)
\qbezier(106,34)(120,15)(135,35) \put(95,2){$a_1a_4b_1b_2^2$}

\put(160,35){\circle{4}} \put(190,35){\circle*{3}}
\qbezier(161,37)(175,55)(190,35) \qbezier(161,34)(175,15)(190,35)
\put(204,21){\circle{4}} \put(204,49){\circle{4}}
\put(215,10){\circle*{3}} \put(190,35){\line(1,1){13}}
\put(190,35){\line(1,-1){13}} \put(215,10){\line(-1,1){10}}
\put(165,2){$a_1a_2^2b_1b_4$}

\put(240,35){\circle{4}} \put(270,35){\circle*{3}}
\qbezier(241,37)(255,55)(270,35) \qbezier(241,34)(255,15)(270,35)
\put(284,21){\circle{4}} \put(284,49){\circle{4}}
\put(295,60){\circle*{3}} \put(270,35){\line(1,1){13}}
\put(270,35){\line(1,-1){13}} \put(295,60){\line(-1,-1){10}}
\put(245,2){$a_1a_2^2b_1b_4$}
\end{picture}\]

\[\begin{picture}(255,60)
\put(0,35){\circle*{3}} \put(15,35){\circle{4}}
\put(0,35){\line(1,0){13}} \put(45,35){\circle*{3}}
\qbezier(16,37)(30,55)(45,35) \qbezier(16,34)(30,15)(45,35)
\put(58,35){\circle{4}} \put(70,35){\circle*{3}}
\put(45,35){\line(1,0){11}} \put(70,35){\line(-1,0){10}}
\put(15,2){$a_2a_3b_1^2b_3$}

\put(100,35){\circle{4}} \put(130,35){\circle*{3}}
\qbezier(101,37)(115,55)(130,35) \qbezier(101,34)(115,15)(130,35)
\put(144,35){\circle{4}} \put(130,35){\line(1,0){12}}
\put(155,24){\circle*{3}} \put(155,46){\circle*{3}}
\put(155,24){\line(-1,1){10}} \put(155,46){\line(-1,-1){10}}
\put(110,2){$a_2a_3b_1^2b_3$}

\put(185,35){\circle{4}} \put(210,35){\circle{4}}
\put(197,35){\circle*{3}} \put(187,35){\line(1,0){21}}
\put(240,35){\circle*{3}} \qbezier(211,37)(225,55)(240,35)
\qbezier(211,34)(225,15)(240,35) \put(255,35){\circle{4}}
\put(240,35){\line(1,0){13}} \put(205,2){$a_1^2a_3b_2b_3$}
\end{picture}\]

\[\begin{picture}(255,55)
\put(0,46){\circle{4}} \put(0,24){\circle{4}}
\put(11,35){\circle*{3}} \put(25,35){\circle{4}}
\put(11,35){\line(1,0){12}} \put(11,35){\line(-1,1){10}}
\put(11,35){\line(-1,-1){10}} \put(55,35){\circle*{3}}
\qbezier(26,37)(40,55)(55,35) \qbezier(26,34)(40,15)(55,35)
\put(15,2){$a_1^2a_3b_2b_3$}

\put(95,35){\circle*{3}} \put(105,35){\circle{4}}
\put(130,35){\circle{4}} \put(117,35){\circle*{3}}
\put(95,35){\line(1,0){8}} \put(107,35){\line(1,0){21}}
\put(160,35){\circle*{3}} \qbezier(131,37)(145,55)(160,35)
\qbezier(131,34)(145,15)(160,35) \put(115,2){$a_2a_3b_1b_2^2$}

\put(190,35){\circle{4}} \put(220,35){\circle*{3}}
\qbezier(191,37)(205,55)(220,35) \qbezier(191,34)(205,15)(220,35)
\put(233,35){\circle{4}} \put(245,35){\circle*{3}}
\put(255,35){\circle{4}} \put(220,35){\line(1,0){11}}
\put(235,35){\line(1,0){18}} \put(205,2){$a_1a_2^2b_2b_3$}
\end{picture}\]
\end{ex}

\section{Normalized model}
\pn We will study $2^1$-maps with $p+1$ edges, where $p$ is a
prime number. Let $D$ be such map with $n+1$ white vertices and
$m+1$ black (here $n+m=p-1$). Let $\beta$ be a Belyi function for
$D$ such, that some white vertex of degree $>1$ is in 0 and some
black vertex in 1. Let $v_0,v_1,\ldots,v_n$ be white vertices of
degrees $k_0,k_1,\ldots,k_n$, respectively, and let
$x_0,x_1,\ldots,x_n$ be their coordinates. Analogously, let
$u_0,u_1,\ldots,u_m$ be black vertices of degrees
$l_0,l_1,\ldots,l_m$, respectively, and let $y_0,y_1,\ldots,y_m$
be their coordinates. Function $\beta$ has a pole at point $c$
inside the face of perimeter 2. We can assume that
$$\beta=\dfrac{\prod_{i=0}^n (z-x_i)^{k_i}}{z-c}\,.$$ If
$\beta(y_j)=r$, $j=0,\ldots,m$, then
$$\beta-r=\dfrac{\prod_{j=0}^m (z-y_j)^{l_j}}{z-c}\,.$$ Let $K$ be
the big field of definition of function $\beta$, i.e. $K$ contains
coordinates of all vertices (and numbers $c$ and $r$). Let $\rho$
be a prime divisor in $K$ that divides $p$, $v_\rho$ be the
corresponding valuation and $e$ --- ramification index of $p$ with
respect to $\rho$. Let $O$ be the ring of $\rho$-integral
elements, i.e.
$$O=\{k\in K\,|\,v_\rho(k)\geqslant 0\}$$ and let $I=\{k\in
K\,|\, v_\rho(k)>0\}$ be the maximal ideal in $O$. \pmn If
$$w=\min_{0\leqslant j\leqslant m} v_\rho(y_j) \text{ and }
v_\rho(y_i)=w,$$ then
$w\leqslant 0$, because some black vertex is at 1. \pmn Now let us
make a change of variables: all coordinates we divide by $y_i$. We
preserve the notation, but will remember that coordinates of all
black vertices are $\rho$-integral (and some black vertex is at
1). \pmn In equation
$$\prod_{i=0}^n (z-x_i)^{k_i}-r(z-c)=\prod_{j=0}^m (z-y_j)^{l_j}
\eqno(3.1)$$ the right-hand side is $\rho$-integral, hence, the
left-hand side is also $\rho$-integral. But some white vertex of
degree $>1$ is at 0, so
$$\prod_{i=0}^n (z-x_i)^{k_i}=z^{p+1}+a_1z^p+a_2z^{p-1}+\ldots+
a_{p-1}z^2$$ and coefficients $a_1,\ldots,a_{p-1}$ are
$\rho$-integral. Hence, coordinates $x_0,\ldots,x_n$ are also
$\rho$-integral. \pmn Let us consider the numerator of derivative
$\beta'$:
\[\parbox{10cm}
{$\begin{array}{l}((p+1)z^p+p\,a_1z^{p-1}+(p-1)a_2
z^{p-2}+\ldots+2a_{p-1}z)(z-c)-\\ \\ \hspace{1cm}-
(z^{p+1}+a_1z^p+a_2z^{p-1}+\ldots+a_{p-1}z^2)=
\\ \\ \hspace{2cm}
=p\,\prod_{i=0}^n (z-a_i)^{k_i-1} \prod_{j=0}^m
(z-y_j)^{l_j-1}.\end{array}$} \hspace{2cm} (3.2)\] Coefficients of
the polynomial in the right-hand side of (3.2) belong to $I$,
hence, the same is true about the left side of (3.2). So, we have
the following relations:
\[\hspace{2cm}\parbox{4cm}{$\begin{array}{l}
-(p+1)c+(p-1)\,a_1\in I\\
-p\;c\,a_1+(p-2)\,a_2
\in I\\ -(p-1)\,c\,a_2+(p-3)\,a_3\in I\\
\qquad\ldots\qquad\ldots\qquad\ldots\\ -3\,c\,a_{p-2}+a_{p-1}\in I
\end{array}$} \hspace{6cm}(3.3)\] \pmn
The first relation gives us the $\rho$-integrality of $c$. The
second --- that $a_2\in I$. The third --- that $a_3\in I$, and so
on. We see that all coefficients of the polynomial
$z^{p+1}+a_1z^p+\ldots+a_{p-1}z^2$ except, maybe $a_1$, belong to
$I$. It means that coordinates of all white vertices except, maybe
one of degree 1, belong to $I$. \pmn Now let us consider black
vertices.
\begin{prop} There exists a black vertex of degree $>1$ such, that
its coordinate does not belong to $I$. \end{prop}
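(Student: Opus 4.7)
The plan is to reduce equation (3.1) modulo $I$ and read off the possible residues of the black coordinates $y_j$ directly from the resulting factorization.

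First I would determine $c$ and $r$ modulo $I$. From the first relation in (3.3), together with $p\in I$, we obtain $c\equiv -a_1\pmod I$. Since $a_1=-\sum_{i}k_ix_i$ and all white coordinates lie in $I$ with the possible exception of a single $x_s$ of degree $1$, this gives either $c\equiv 0$ or $c\equiv x_s\pmod I$. Evaluating $\beta$ at the black vertex placed at $1$ yields $r=\prod(1-x_i)^{k_i}/(1-c)$; cancelling the factor $(1-x_s)$ against $(1-c)\equiv(1-x_s)$ in the exceptional case shows $r\equiv 1\pmod I$ in either situation.

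Next I would reduce both sides of (3.1). Depending on whether an exceptional $x_s$ exists, the left side reduces modulo $I$ to $z(z^p-1)$ or to $(z-x_s)(z^p-1)$. Using the Frobenius congruence $z^p-1\equiv(z-1)^p\pmod I$ (valid because $p\in I$), I obtain
$$\prod_{j=0}^m(z-y_j)^{l_j}\equiv(z-\gamma)(z-1)^p\pmod I,$$
where $\gamma\in\{0,x_s\}$ and in either case $\gamma\not\equiv 1\pmod I$.

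From this identity each $y_j$ is congruent to $\gamma$ or to $1$ modulo $I$, and since $\gamma$ appears as a simple root at most one $y_j$ can be congruent to $\gamma$, with that vertex forced to have $l_j=1$. Consequently every black vertex of degree $>1$ satisfies $y_j\equiv 1\pmod I$, hence $y_j\notin I$. Such a vertex is guaranteed to exist because the digon bounding the face of perimeter $2$ is incident to a common black endpoint along both of its edges, so that vertex has degree $\geqslant 2$ in $D$. I expect the delicate point to be the computation of $r\bmod I$ in the exceptional case, where the quotient $(1-x_s)/(1-c)$ is of indeterminate form $(1+I)/(1+I)$ and requires the congruence $c\equiv x_s\pmod I$ to yield a well-defined residue.
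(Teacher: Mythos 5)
Your reduction of (3.1) modulo $I$, together with the Frobenius factorization $z^p-1\equiv(z-1)^p$, is a clean way to organize the conclusion, and it does settle the case where no exceptional white vertex exists, as well as the case where the exceptional $x_s$ satisfies $1-x_s\notin I$ (there the quotient $(1-x_s)/(1-c)$ is a ratio of units with equal residues, so $r\equiv 1$ and your factorization argument goes through). But there is a genuine gap in the remaining sub-case $x_s\equiv 1\pmod I$, and that sub-case carries the entire content of the proposition. There both $1-x_s$ and $1-c$ lie in $I$ (not in $1+I$, as your closing sentence suggests), and the congruence $c\equiv x_s\pmod I$ does \emph{not} determine the residue of $(1-x_s)/(1-c)$: knowing $x_s-c\in I$ gives no control over the quotient once numerator and denominator are both in $I$, since their valuations need not agree. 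A priori $r$ could lie in $I$; in that event your identity becomes $\prod_j(z-y_j)^{l_j}\equiv (z-1)\,z^p\pmod I$, i.e.\ every black vertex except one of degree $1$ has coordinate in $I$ --- exactly the configuration the proposition must exclude. So your argument reduces the statement to the claim $v_\rho(r)=0$ in this sub-case, which is essentially equivalent to what is being proved rather than a proof of it. (Your side claim that $\gamma=x_s\not\equiv 1\pmod I$ is likewise unjustified, though it is not needed for your conclusion.)

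For comparison, the paper's proof is aimed precisely at this residual configuration: assuming all black vertices of degree $>1$ have coordinates in $I$, it deduces that the exceptional white vertex and $c$ are both congruent to $1$ modulo $I$, then picks the coordinate $x_s$ of minimal positive valuation among the remaining ones, rescales $z\mapsto x_sz$, and compares the two expressions for the numerator of $\beta_1'$: the expression $p\prod_i(z-\widetilde{x}_i)^{k_i-1}\prod_j(z-\widetilde{y}_j)^{l_j-1}$ (up to the scaling factor) has all coefficients in $I$, while the expression obtained by differentiating the explicit rational function visibly does not. Some valuation-theoretic input of this kind is indispensable; to complete your proof you would need an independent argument that $r\notin I$ when $x_s\equiv c\equiv 1\pmod I$.
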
 \pmn The proof
of this proposition will be given in Section 4. \pmn The
coordinate $y_s$ of some black vertex $u_s$ of degree $>1$ does
not belong to ideal $I$, i.e. $v_\rho(y_s)=0$. Let us make a
change of variable: all coordinates we divide by $y_s$. As before
some white vertex of degree $>1$ is at 0 and coordinates of all
white vertices except, maybe one of degree 1, belong to $I$. But
now some black vertex of degree $>1$ is at 1. \pmn Now the change
$z:=1-z$ and $\beta:=\beta-r$ allows one to apply  the above
reasoning to black vertices also.
\begin{defin} Belyi function $\beta$ of some $2^1$-map $D$ with
$p+1$ edges is called its \emph{normalized model} (with respect to
a prime divisor $\rho$), if:
\begin{itemize}
    \item coordinates of all vertices (black and white) and
    coordinate of the pole are $\rho$-integral
    \item some white vertex of degree $>1$ is at $0$ and some
    black vertex of degree $>1$ is at $1$;
    \item coordinates of all white vertices except, maybe one of
    degree 1, belong to the ideal $I$;
    \item coordinates of all black vertices except, maybe one of
    degree 1, belong to the set $\{1+I\}$.
\end{itemize} $\beta$ is called \emph{completely normalized}, if
its critical values are $0$ (value in white vertices) and $1$
(value in black vertices).
\end{defin}
\begin{theor} For each $2^1$-map with $p+1$ edges there exists a
normalized model. \end{theor}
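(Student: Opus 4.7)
The plan is to assemble the argument that has essentially been developed in the paragraphs preceding the theorem, together with Proposition 3.1, into a single coherent construction. I would start from an arbitrary Belyi function $\beta$ for $D$ with a white vertex of degree $>1$ placed at $0$ and a black vertex placed at $1$ (this is a harmless affine normalization). Then I would perform the rescaling $z\mapsto z/y_i$ where $y_i$ realizes $w=\min_j v_\rho(y_j)\le 0$; after this step all black coordinates are $\rho$-integral, some black vertex is still at $1$, and a white vertex of degree $>1$ is still at $0$.

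Next, I would invoke the identity (3.1). Its right-hand side $\prod_j(z-y_j)^{l_j}$ now lies in $O[z]$, so the left-hand side is $\rho$-integral. Since a white vertex of degree $>1$ sits at $0$, the left-hand side has the form $z^{p+1}+a_1z^p+\dots+a_{p-1}z^2$, which forces each $a_i\in O$ and hence every $x_j\in O$. I would then differentiate, get the polynomial relation (3.2), and use that its right-hand side is divisible by $p$ to conclude that every coefficient of the left-hand side lies in $I$. Reading off the coefficients in succession yields the chain (3.3), which gives inductively $c\in O$, then $a_2\in I$, $a_3\in I,\dots,a_{p-1}\in I$. Therefore every elementary symmetric function of the $x_j$ beyond the top one lies in $I$, which is equivalent to saying that at most one of the $x_j$ lies outside $I$, and that exceptional vertex, if it exists, must have degree $1$ (a vertex of degree $\ge 2$ at $0$ is already accounted for).

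At this point the white side satisfies every requirement of Definition 3.1; the black side only knows that the coordinates are $\rho$-integral. I would now apply Proposition 3.1 to extract a black vertex $u_s$ of degree $>1$ with $v_\rho(y_s)=0$, perform a second rescaling $z\mapsto z/y_s$ (which preserves $\rho$-integrality of all coordinates and the position of $0$, and places $u_s$ at $1$), and then make the involutive change of variable $z\mapsto 1-z$ together with $\beta\mapsto \beta-r$ normalized so that white and black vertices trade roles. The resulting map is again a $2^1$-map with $p+1$ edges, with black vertex of degree $>1$ at $0$ and white vertex of degree $>1$ at $1$. Applying the very same argument as above now shifts every black coordinate into $1+I$ except possibly for one black vertex of degree $1$, while leaving intact the properties already obtained for the white vertices (because these are symmetric under $z\mapsto 1-z$). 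Undoing the involution produces the required normalized model.

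The only nontrivial ingredient is Proposition 3.1, which is cited but proved later; every other step is either an affine change of coordinates or an inductive extraction from the coefficient relations (3.3). The main subtlety to watch is that each rescaling respects the previously established integrality statements. The first rescaling by $y_i$ with $v_\rho(y_i)\le 0$ does not spoil anything since it is performed before any white conclusion is drawn, and the second rescaling by $y_s$ is by a $\rho$-unit, so it preserves both the location of $0$ and all valuations; similarly $z\mapsto 1-z$ is a unit-preserving involution. Once these bookkeeping points are checked, the theorem follows.
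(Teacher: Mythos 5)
Your proposal is correct and follows essentially the same route as the paper: the paper's proof of Theorem 3.1 is exactly the chain of reductions in Section 3 (rescale by the black coordinate of minimal valuation, read integrality of the $x_i$ and $c$ off equation (3.1) and the relations (3.3), conclude that at most one white coordinate --- necessarily of a degree-$1$ vertex --- escapes $I$, then invoke Proposition 3.1, rescale by the unit $y_s$, and transfer the argument to the black side via $z\mapsto 1-z$, $\beta\mapsto\beta-r$). The bookkeeping points you flag (the second rescaling is by a $\rho$-unit, the involution does not disturb the white-side conclusions) are exactly the ones the paper relies on implicitly.
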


\section{The proof of Proposition 3.1.}
\begin{proof} Let us assume that there exists a $2^1$-map $D$ and
its Belyi function $\beta$ such, that
\begin{itemize}
    \item coordinates of all vertices are $\rho$-integral and the
    same is true for $c$;
    \item a white vertex of degree $>1$ is at 0 and coordinates of
    all other white vertices (except, maybe one of degree 1)
    belong to ideal $I$;
    \item some black vertex is at 1 and coordinates of all
    black vertices of degree $>1$ belong to $I$.
\end{itemize} As coefficients of polynomial
$$\prod_{j=0}^m (z-y_j)^{l_j}=z^{p+1}+b_1z^p+\ldots+b_{p-1}z^2+
b_pz+b_{p+1}$$ are equal to the corresponding coefficients $a_i$
of polynomial $\prod_i (z-x_i)^{k_i}$ for $i<p$, then $b_i\in I$
for $i>1$ and coordinates of all black vertices (except one that
is at 1) belong to $I$. \pmn As $a_1=b_1$, there exists a white
vertex of degree 1 whose coordinate belongs to the set $\{1+I\}$.
Let $y_0=1$, $y_j\in I$ for $j>0$, $x_0-1\in I$ and $x_i\in I$ for
$i>0$. As $c+a_1\in I$ (see (3.3)), then $c-1\in I$. \pmn Let
$$k=\min\bigl(\min_{i>0} v_\rho(x_i),\min_{j>0} v_\rho(y_j)
\bigr)>0.$$ We can assume that $v_\rho(x_s)=k$, $s>0$. Let us
consider the function
$$\beta_1=x_s^{-p}\beta(x_s z)=x_s^{-p}\cdot\frac{\prod_{i=0}^n
(x_s z-x_i)^{k_i}}{x_s z-c}=\frac{(x_s
z-x_0)\prod_{i>0}(z-\widetilde{x}_i)^{k_i}}{x_s z-c}\,,$$ where
$\widetilde{x}_i=x_i/x_s$. Analogously, let $\widetilde{y}_j=
y_j/x_s$. \pmn Let us find the derivative $\beta'_1$:
\begin{multline*}\beta'_1=x_s^{-p}x_s\beta'(x_s z)=
x_s^{-p}x_s\, \frac{p\prod_{i=0}^n(x_s
z-x_i)^{k_i-1}\prod_{j=0}^m(x_s z-y_j)^{l_j-1}}{(x_s
z-c)^2}=\\=x_s^2\,\frac{p\prod_{i=0}^n(z-\widetilde{x}_i)^{k_i-1}
\prod_{j=0}^m(z-\widetilde{y}_j)^{l_j-1}}{(x_s z-c)^2}
\end{multline*} (because $n+m=p-1$). We see that all coefficients
of the numerator belong to $I$. On the other hand, this numerator
is equal to
$$\left[(x_s z-x_0)\prod_{i=1}^n(z-\widetilde{x}_i)^{k_i}\right]'
(x_s z-c)-x_s\,(x_s z-x_0)\prod_{i=1}^n(z-\widetilde{x}_i)^{k_i}.
\eqno(4.1)$$ As $x_0$, $\widetilde{x}_s$ and $c$ do not belong to
$I$, but $x_s\in I$, then not all coefficients of polynomial (4.1)
belong to the ideal $I$. We have a contradiction.
\end{proof}

\section{Arithmetic of normalized model}
\pn Let $\beta$ be a normalized model.
\subsection{The first case: the coordinate of some white vertex
of degree 1 does not belong to $I$} Then $c\notin I$ and
coordinate of some black vertex of degree 1 does not belong to the
set $\{1+I\}$. Let us assume that $x_0\notin I$ and $y_0-1\notin
I$. \pmn On one hand
$${\rm numerator}(\beta')=\prod_{i=0}^n (z-x_i)^{k_i}\left(\sum_{i=0}^n
\frac{k_i}{z-x_i}\right)(z-c)-\prod_{i=0}^n (z-x_i)^{k_i}.$$ On
the other hand
$${\rm numerator}(\beta')=p\prod_{i=0}^n (z-x_i)^{k_i-1}\prod_{j=0}^m
(z-y_j)^{l_j-1}.$$ Thus,
$$\prod_{i=0}^n (z-x_i)\cdot\left(\sum_{i=0}^n \frac{k_i}{z-x_i}\right)
(z-c)-\prod_{i=0}^n (z-x_i)=p\prod_{j=0}^m(z-y_j)^{l_j-1}\,.
\eqno(5.1)$$ Let the vertex $v_1$ be at zero, i.e. $x_1=0$. Then
$$k_1x_0c\prod_{i>1}x_i=\pm p\prod_{j=0}^m y_j^{l_j-1} \Rightarrow
\sum _{i=2}^n v_\rho(x_i)=e \Rightarrow (n-1)\,k\leqslant e,
\eqno(5.2)$$ where $k=\min_{i>1}v_\rho(x_i)$. \pmn Now, as in
previous section, we will work with the function
$$\beta_1=x_s^{-p}\beta(x_s z)=x_s^{-p}\,\frac{(x_s z-x_0)
\prod_{i=1}^n (x_s z-x_i)^{k_i}}{x_s z-c}=\frac{(x_s z-x_0)
\prod_{i=1}^n (z-\widetilde{x}_i)^{k_i}}{x_s z-c}\,,$$ where
$v_\rho(x_s)=k$ and $\widetilde{x}_i=x_i/x_s$. \pmn As in the
previous section we deduce, that not all coefficients of the
numerator of derivative
\begin{multline*}\beta'_1(z)=x_s^{1-p}\beta'(x_s
z)=x_s^{1-p}\, \frac{p\,\prod_{i=1}^n(x_s z-x_i)^{k_i-1}
\prod_{j=0}^m(x_s z-y_j)^{l_j-1}}{(x_s z-c)^2}=\\=x_s^{1-n}\,
\frac{p\,\prod_{i=1}^n (z-\widetilde{x}_i)^{k_i-1}
\prod_{j=0}^m(x_s z-y_j)^{l_j-1}}{(x_s z-c)^2}\end{multline*}
belong to $I$. Thus, $(n-1)\,k\geqslant e$. Taking (5.2) into
account we have that
$$v_\rho(x_i)=\frac{e}{n-1}\, \text{ for $i>0$ and $x_i\neq 0$.}
\eqno(5.3)$$ The substitution $x_j$ instead of $z$ in (5.1), where
$j>0$, $x_j\neq 0$, and subsequent computing  valuations gives the
equality
$$\sum_{i>0,i\neq j} v_\rho(x_i-x_j)=e.$$ As
$v_\rho(x_i-x_j)\geqslant e/(n-1)$, then
$$v_\rho(x_i-x_j)=\frac{e}{n-1}\, \text{ for $i,j>0$ and $i\neq j$.}
\eqno(5.4)$$

\subsection{The second case: all $x_i\in I$} In this case
$c\in I$ and coordinates of all black vertices belong to the set
$\{1+I\}$, except one of degree 1 --- coordinate of this vertex
belong to $I$. Let $k=\min v_\rho(x_i)$ and let
$$J=\{i, 0\leqslant i\leqslant n\,|\, v_\rho(x_i)=k\}.$$

\subsubsection{Either the cardinality of $J$ is $>1$, or $J=\{l\}$
and degree of $v_l$ is $>1$}  Let $x_0=0$. After the substitution
$z=0$ in (5.1) and computing valuations we get the equality
$$\sum_{i=1}^n v_\rho(x_i)+v_\rho(c)=e. \eqno(5.5)$$ Thus,
$v_\rho(c)<e$ and $v_\rho(x_i)<e$ for $i=1,\ldots,n$. As
$$v_\rho(-(p+1)c+(p-1)a_1)\geqslant e$$ (see (3.2)), then
$v_\rho(c)\geqslant k$ and from (5.5) we get the relation
$(n+1)k\leqslant e$. \pmn We will work with function
$$\beta_1=x_s^{-p}\beta(x_s z)= \frac{\prod_{i=0}^n
(z-\widetilde{x}_i)^{k_i}}{z-\widetilde{c}}\,,$$ where, as above,
$v_\rho(x_s)=k$, $\widetilde{x}_i=x_i/x_s$ and $\widetilde{c}=
c/x_s$. Let
$$\prod_{i=0}^n (z-\widetilde{x}_i)^{k_i}=z^{p+1}+\widetilde{a}_1
z^p+\widetilde{a}_2z^{p-1}+\ldots$$ Then
$$\beta'_1=\frac{(z-\widetilde{c})\cdot\bigl[(p+1)z^p+
p\,\widetilde{a}_1z^{p-1}+\ldots \bigr]-
\bigl[z^{p+1}+\widetilde{a}_1z^p+\ldots
\bigr]}{(z-\widetilde{c})^2}\,.$$ There exists $j>1$ such, that
$v_\rho(\widetilde{a}_j)=0$, but $v_\rho(\widetilde{a}_i)>0$ for
$i>j$. Now
\begin{multline*}\text{numerator}(\beta'_1)=\ldots+
\bigl[-(p-j+2)\,\widetilde{a}_{j-1}\widetilde{c}+(p-j)\,
\widetilde{a}_j\bigr]\,z^{p-j+1}+\\+\bigl[-(p-j+1)\,\widetilde{a}_j
\widetilde{c}+(p-j-1)\,\widetilde{a}_{j+1}\bigr]\,z^{p-j}\ldots
\end{multline*} If $\widetilde{c}\in I$, then coefficient at
$z^{p-j+1}$ does not belong to $I$. If $\widetilde{c}\notin I$,
then coefficient at $z^{p-j}$ does not belong to $I$.\pmn On the
other hand
$$\beta'_1=x_s^{-(n+1)}\,\frac{p\prod_{i=0}^n(z-\widetilde{x}_i)^{k_i-1}
\prod_{j=0}^m(x_s z-y_j)^{l_j-1}}{(z-\widetilde{c})^2}\,.$$ As not
all coefficients of numerator belong to $I$, then $e\leqslant
(n+1)k$. Hence,
$$v_\rho(x_1)=\ldots=v_\rho(x_n)=v_\rho(c)=\frac{e}{n+1} \text{ and }
v_\rho(x_i-x_j)=\frac{e}{n+1} \text{ for } i\neq j\,. \eqno(5.6)$$

\subsubsection{The degree of $v_0$ is 1, $v_\rho(x_0)=k$ and
$v_\rho(x_i)>k$, if $i>0$} Then $v_\rho(c)=k$. Let $\min_{i>0}
v_\rho(x_i)=l>k$. As
$$\sum_{x_i\neq 0} v_\rho(x_i) +v_\rho(c)=e\,,\text{ then }
2k+(n-1)\,l\leqslant e\,.$$ The same computations with the
function $\beta_1=x_s^{-p} \beta(x_s z)$, where $v_\rho(x_s)=l$,
give us the relation $2k+(n-1)\,l \geqslant e$, i.e.
$$v_\rho(x_i)=\frac{e-2k}{n-1} \text{ for $i>0$ and $x_i\neq 0$
and } v_\rho(x_i-x_j)= \frac{e-2k}{n-1} \text{ for $i,j>0$}\,.
\eqno(5.7)$$  Let us summarize.
\begin{theor} Let $\beta$ be a normalized model of some $2^1$-map
with $p+1$ edges, $n+1$ white vertices and $m+1$ black ones. Let
$\rho$ be a prime divisor in the big definition field $K$, that
divides $p$, $v_\rho$ be the corresponding valuation and
$$I=\{k\in K\,|\,v_\rho(k)>0\}\subset K.$$ Let $e=v_\rho(p)$ be
the ramification index. There are three options.
\begin{itemize}
\item $v_\rho(x_i)=e/(n-1)$ for all white vertices, except one
$v_s$ of degree 1 and $x_s\notin I$. Moreover,
$v_\rho(x_i-x_j)=e/(n-1)$ for $i,j\neq s$. \item
$v_\rho(x_i)=e/(n+1)$ for all $i$, $v_\rho(x_i-x_j)=e/(n+1)$ for
$i,j$. \item $x_i\in I$ for all $I$ and $v_\rho(x_i)=l$ for all
white vertices, except one $v_s$ of degree 1 for which
$v_\rho(x_s)=k<l$. Moreover $l=(e-2k)/(n-1)$ and
$v_\rho(x_i-x_j)=l$ for all $i,j\neq s$.
\end{itemize} \end{theor}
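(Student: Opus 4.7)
The plan is to treat Theorem 5.1 as a consolidation of three mutually exclusive cases determined by where the minimum $v_\rho$-valuation of the white-vertex coordinates is realized. Since $\beta$ is a normalized model, coordinates of all $x_i$ lie in $I$ except possibly for a single vertex $v_s$ of degree~1, so the natural trichotomy is: (1) such a degree-1 exceptional vertex exists with $x_s \notin I$; (2) no exceptional vertex, so every $x_i \in I$; (3) every $x_i \in I$ but the unique minimum of $v_\rho(x_i)$ is attained at an isolated degree-1 vertex. I would handle these separately.

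The central identity in each case is the equality obtained by comparing the two expressions for the numerator of $\beta'$: one via logarithmic differentiation of $\prod(z-x_i)^{k_i}/(z-c)$, the other via the critical-point factorization $p\prod(z-x_i)^{k_i-1}\prod(z-y_j)^{l_j-1}$. This yields equation (5.1). Substituting an appropriate root into (5.1) and computing valuations gives an \emph{upper} bound of the form $(n-1)k \leqslant e$, $(n+1)k \leqslant e$, or $2k + (n-1)l \leqslant e$ in the respective cases.

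The matching \emph{lower} bound is obtained by the rescaling trick $\beta_1(z) = x_s^{-p}\beta(x_s z)$, where $x_s$ realizes the relevant minimum valuation. The numerator of $\beta_1'$ then inherits, from its factored form involving $p$, the property that all of its coefficients lie in $I$; but on the other hand, expanding it directly in terms of the rescaled coordinates $\widetilde{x}_i = x_i/x_s$ and $\widetilde{c}=c/x_s$, the hypothesis that some $\widetilde{x}_i$ or $\widetilde{c}$ is a unit forces at least one explicit coefficient to be a unit. Demanding both yields the reverse inequality and hence the exact values $e/(n-1)$, $e/(n+1)$, or $(e-2k)/(n-1)$. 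Finally, substituting $z = x_j$ (for $j \neq s$) into (5.1) gives $\sum_{i \neq j} v_\rho(x_i - x_j) = e$, and combined with the trivial lower bound $v_\rho(x_i-x_j) \geqslant \min v_\rho(x_i)$, this collapses to the uniform valuation of pairwise differences.

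The main obstacle I expect is in Case 2, where the rescaled polynomial argument must be run carefully: one must identify \emph{which} coefficient of the numerator of $\beta_1'$ escapes $I$, and this choice depends on whether $\widetilde{c} \in I$ or not. The dichotomy forces one to track two consecutive coefficients (at $z^{p-j+1}$ and $z^{p-j}$, where $j$ is the smallest index with $\widetilde{a}_j$ a unit) and to check that at least one of them is a unit in either subcase. The other cases are bookkeeping once the rescaling principle is set up. The uniform value of $v_\rho(x_i-x_j)$ for $i,j \neq s$ then follows by the substitution argument above, completing the proof.
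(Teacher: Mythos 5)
Your proposal follows the paper's own proof essentially step for step: the same three-case split (the paper's Subsections 5.1, 5.2.1 and 5.2.2), the same identity (5.1) giving the upper bounds $(n-1)k\leqslant e$, $(n+1)k\leqslant e$, $2k+(n-1)l\leqslant e$, the same rescaling $\beta_1=x_s^{-p}\beta(x_s z)$ for the matching lower bounds, and the same substitution $z=x_j$ into (5.1) for the pairwise differences. The only slip is in Case 2, where the index $j$ must be the \emph{largest} one with $\widetilde{a}_j$ a unit (so that $\widetilde{a}_{j+1}\in I$ when $\widetilde{c}\notin I$), not the smallest; with that correction your coefficient analysis is exactly the paper's.
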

\begin{rem} Analogous results are valid for black vertices. \end{rem}

\section{Canonical model}
\pn  \begin{defin} A Belyi function $\beta$ of some $2^1$-map is
called its canonical model if
\begin{itemize}
    \item value of $\beta$ is 0 in white vertices and 1 --- in
    black;
    \item sum of coordinates of white vertices of degree $>1$ is
    0;
    \item sum of coordinates of black vertices of degree $>1$ is
    1.
\end{itemize} \end{defin}
\begin{theor} For each $2^1$-map with $p+1$ edges there exists the
unique canonical model. \end{theor}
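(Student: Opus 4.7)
The plan is to obtain the canonical model from a normalized one (which exists by Theorem 3.1) in two steps: first rescale the \emph{target} to turn the two finite critical values into $0$ and $1$, then apply an affine change $z=\alpha w+\gamma$ of the \emph{source} variable to impose the two centre-of-mass conditions. I would start with a normalized model $\beta$ and let $r$ be its critical value at black vertices; the function $\beta/r$ is still a Belyi function for the same map, takes the value $0$ on white vertices and $1$ on black ones, and leaves every vertex coordinate unchanged. The remaining freedom compatible with the plane structure consists of affine substitutions in the source, because any M\"obius transformation relating two such normalizations must fix the distinguished point $\infty$ associated with the infinite face, and no nontrivial target automorphism can be interposed once white and black vertices are pinned to the values $0$ and $1$.

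Under $z=\alpha w+\gamma$ a vertex coordinate $x_i$ becomes $x'_i=(x_i-\gamma)/\alpha$. Writing $W$, $B$ for the sets of white and black vertices of degree $>1$, and $\bar X$, $\bar Y$ for the averages of the corresponding coordinates, the two required sum conditions reduce to the $2\times 2$ linear system
$$\gamma=\bar X,\qquad \alpha=|B|\,(\bar Y-\bar X).$$
The hard step is verifying $\alpha\neq 0$, i.e.\ $\bar Y\neq\bar X$, and this is exactly where the arithmetic of the normalized model enters. Every $x_i$ with $v_i\in W$ lies in $I$, so $\bar X\in I$; every $y_j$ with $u_j\in B$ lies in $1+I$, so $\bar Y\in 1+I$; hence $\bar Y-\bar X\in 1+I$ is a $\rho$-unit. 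The face of perimeter $2$ forces at least one white and at least one black vertex of degree $\geqslant 2$, so $W$ and $B$ are nonempty; and since the black degrees sum to $p+1$, we have $|B|\leqslant(p+1)/2<p$, so $|B|$ is also a $\rho$-unit. Therefore $\alpha\in K^{\times}$, which proves existence and shows that the canonical model is defined over the same big field $K$ as the normalized one.

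For uniqueness, suppose $\beta_1$ and $\beta_2$ are two canonical models of the same map. By the discussion above they are related by an affine source substitution $\phi(w)=\alpha w+\gamma$. Substituting into the canonical sum conditions for the new coordinates gives
$$\alpha\cdot 0+|W|\,\gamma=0,\qquad \alpha\cdot 1+|B|\,\gamma=1,$$
which forces $\gamma=0$ and $\alpha=1$. Hence $\phi$ is the identity and $\beta_1=\beta_2$.
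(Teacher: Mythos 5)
Your proof is correct and follows essentially the same route as the paper: starting from a normalized model, you impose the two sum conditions via an affine change of the source variable and use the valuation facts ($\bar X\in I$, $\bar Y\in 1+I$, and the vertex counts being prime to $p$) to show the resulting $2\times 2$ system is nondegenerate — this is exactly the paper's determinant $Xl-Yk\notin I$ in disguise. Your explicit rigidity argument for uniqueness (forcing $\gamma=0$, $\alpha=1$) is a welcome elaboration of what the paper leaves implicit, but it is not a different method.
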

\begin{proof} Let $\beta$ be a normalized model of our map with
the big definition field $K$, $X$ be the sum of coordinates of
white vertices of degree $>1$ and $Y$ be the sum of black vertices
of degree $>1$. Then $X\in I$, but $Y\notin I$. Indeed, otherwise
the number of black vertices of degree $>1$ is $p$ and the number
of edges is not less, than $2p$. \pmn For a new variable $z'=az+b$
these sums are $X'=aX+bk$ and $Y'=aY+bl$, where $k$ is the number
of white vertices of degree $>1$ and $l$ is the corresponding
number of black vertices. We have to find numbers $a$ and $b$
such, that
$$\left\{\begin{array}{l} aX+bk=0\\ aY+bl=1\end{array}\right.$$
As $X\,l-Y\,k$ --- the determinant of the system, is $\notin I$
(and, thus, $\neq 0$), then the system has the unique solution and
$$a=-\frac{k}{X\,l-Y\,k},\quad b=\frac{X}{X\,l-Y\,k}.\,$$ We see,
that $a$ is $\rho$-integral, but $a\notin I$. Let
$\widetilde{x}_i=ax_i+b$, $i=0,\ldots,n$ $\widetilde{y}_j=ay_j+b$,
$j=0,\ldots,m$, and $\widetilde{c}=ac+b$. As
$\widetilde{x}_i-\widetilde{x}_j= a(x_i-x_j)$, then
$$v_\rho(\widetilde{x}_i-\widetilde{x}_j)=v_\rho(x_i-x_j).$$
Let $L$ be the field of definition of polynomials
$$P=\prod_{i=0}^n(z-\widetilde{x}_i)^{k_i} \text{ and }
Q=\prod_{j=0}^m (z-\widetilde{y}_j)^{l_j}.$$ As
$P-r(z-\widetilde{c})=Q$, then $r\in L$ and $\widetilde{c}\in L$.
Hence, the function $\widetilde{\beta}=P/r(z-\widetilde{c})$ will
be the canonical model. \end{proof}
\begin{rem} The uniqueness of canonical model means that its field
of definition coincides with the definition field of the map
\end{rem}

\section{Geometrical ramification}
\pn Let $D$ be a $2^1$-map, $k$ --- the maximal degree of its
white vertices and $s_i$, $i=2,\ldots,k$, --- the number of white
vertices of degree $i$. Let
$$s=\text{gcd}(\,s_is_j,\, 2\leqslant i<j\leqslant k,\,
s_i(s_i-1),\, 2\leqslant i\leqslant k).$$ There exists integers
$c_i$ and $c_{ij}$ such, that
$$S=\sum_i c_is_i(s_i-1)+\sum_{i<j} c_{ij}s_is_j.$$ Let $\beta$ be
the canonical model of a map $D$, $v_0,\ldots,v_n$ --- white
vertices, $x_0,\ldots,x_n$ --- their coordinates and
$d_0,\ldots,d_n$ --- their degrees. Function $\beta$ is defined
over the field $L$. Elements
$$t_i=\prod_{i_1\neq i_2,\,d_{i_1}=d_{i_2}=i}
(x_{i_1}-x_{i_2}),\,i>1, \text{ and }
t_{ij}=\prod_{i_1,\,i_2,\,d_{i_1}=i,\,d_{i_2}=j}
(x_{i_1}-x_{i_2}),\,1<i<j$$ belong to $L$. \pmn Let $\tau$ be a
prime divisor of the field $L$ that divides $p$ and is divided by
$\rho$, $v_\tau$ be the corresponding valuation and $e_\tau$ be
the ramification index in the field $L$: $e_\tau=v_\tau(p)$. For
each $u\in L$ we have that $v_\rho(u)=v_\rho(\tau)v_\tau(u)$,
thus, $e=v_\rho(\tau)e_\tau$ and
$$\frac{v_\rho(g)}{v_\rho(h)}=\frac{v_\tau(g)}{v_\tau(h)},\,
\text{ for all $g,h\in L$}.\eqno(7.1)$$ Let us consider the
element
$$t=\prod_i t_i^{c_i}\prod_{i<j} t_{ij}^{c_{ij}}\in L.$$ According
to Theorem 5.1,
$$v_\rho(t)=\begin{cases}\text{either }\, se/(n-1),\\ \text{or}
\hspace{7mm} se/(n+1),\\ \text{or} \hspace{7mm}
s(e-2k)/(n-1)>k.\end{cases}$$ Thus, we have the following
statement.
\begin{theor}\pn
\begin{itemize}
    \item Either $se_\tau/(n-1)$ is integer;
    \item or $se_\tau/(n+1)$ is integer;
    \item or $s(e_\tau-2k)/(n-1)$ is integer and $>k$.
\end{itemize} The same can be said about black vertices.
\end{theor}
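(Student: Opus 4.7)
The plan is to extract an element of $L$ whose $\rho$-valuation is exactly $s$ times the common pairwise valuation furnished by Theorem 5.1, and then to descend from $K$ to $L$ via the relation $v_\rho(u)=v_\rho(\tau)\,v_\tau(u)$ recorded in (7.1).

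First I would invoke Theorem 5.1 together with the observation (made in the discussion preceding Theorem 6.1) that the affine change of coordinates $z'=az+b$ used to pass from a normalized model to the canonical one has $a$ a $\rho$-unit, hence preserves $v_\rho(x_{i_1}-x_{i_2})$. Consequently the trichotomy of Theorem 5.1 remains valid for the canonical model, and there is a single value
\[
v\in\{\,e/(n-1),\;e/(n+1),\;(e-2k)/(n-1)\,\}
\]
(depending on the map, not on the pair) such that $v_\rho(x_{i_1}-x_{i_2})=v$ for every pair of white vertices $v_{i_1},v_{i_2}$ both of degree $>1$. Counting factors then yields $v_\rho(t_i)=s_i(s_i-1)\,v$ and $v_\rho(t_{ij})=s_is_j\,v$.

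Next, using the integers $c_i$ and $c_{ij}$ that express $s$ as a $\mathbb{Z}$-linear combination of the $s_i(s_i-1)$ and $s_is_j$, I would compute
\[
v_\rho(t)=\sum_i c_i\,v_\rho(t_i)+\sum_{i<j}c_{ij}\,v_\rho(t_{ij})=s\cdot v.
\]
Since $t\in L$, the identity (7.1) forces $v_\tau(t)=v_\rho(t)/v_\rho(\tau)\in\mathbb{Z}$. Substituting $e=v_\rho(\tau)\,e_\tau$ into the first two possibilities for $v$ gives immediately that $s\,e_\tau/(n-1)$ or $s\,e_\tau/(n+1)$ is an integer, which are the first two options of the theorem.

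The third case is where I expect the main (minor) obstacle, since the parameter $k=v_\rho(x_s)$ appearing in Theorem 5.1 is measured in $K$ and is not a priori a multiple of $v_\rho(\tau)$. Here one uses that $v_\rho(t)=s(e-2k)/(n-1)$ is an integer which \emph{must} be divisible by $v_\rho(\tau)$ (because $t\in L$); dividing through and setting $k'=k/v_\rho(\tau)$ gives $v_\tau(t)=s(e_\tau-2k')/(n-1)\in\mathbb{Z}$, which is the third option of the theorem (with $k'$ playing the role of $k$ in the statement, the strict inequality $>k$ being inherited from the corresponding inequality of Theorem 5.1 after division by $v_\rho(\tau)$). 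Finally, the analogous product in the differences $y_{j_1}-y_{j_2}$ of black-vertex coordinates, together with the black-vertex analogue of Theorem 5.1 noted in Remark~5.1, delivers the parallel assertion for black vertices.
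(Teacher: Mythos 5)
Your proposal is correct and follows essentially the same route as the paper: form the Galois-invariant element $t=\prod_i t_i^{c_i}\prod_{i<j}t_{ij}^{c_{ij}}\in L$, compute $v_\rho(t)=s\cdot v$ from the single common pairwise valuation $v$ of Theorem 5.1, and use $v_\rho(u)=v_\rho(\tau)v_\tau(u)$ to force $v_\tau(t)=sv/v_\rho(\tau)\in\mathbb{Z}$. You are in fact slightly more careful than the paper on two points it leaves implicit --- that the affine change to the canonical model preserves the valuations of differences, and that in the third case the quantity $k$ must be rescaled to $k/v_\rho(\tau)$ when passing from $v_\rho$ to $v_\tau$.
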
 \pmn
\begin{ex} Let us consider $2^1$-maps with passport
$a_1^4a_2^2b_1b_7$. The number of edges is $8=7+1$. There are 5
such maps: \pmn to the right side of the map \,\,\parbox{23mm}{
\begin{picture}(350,40) \put(0,20){\circle{4}} \put(30,20){\circle*{4}}
\put(15,22){\oval(30,28)[t]} \put(15,18){\oval(30,28)[b]}
\put(30,20){\line(1,1){14}} \put(30,20){\line(5,2){18}}
\put(30,20){\line(1,0){21}} \put(30,20){\line(5,-2){18}}
\put(30,20){\line(1,-1){14}} \put(45,35){\circle{4}}
\put(50,28){\circle{4}} \put(53,20){\circle{4}}
\put(50,12){\circle{4}} \put(45,5){\circle{4}}
\end{picture}}  we must add a "tail"\,\, \begin{picture}(20,4)
\put(2,2){\circle{4}} \put(18,2){\circle*{4}}
\put(4,2){\line(1,0){15}}
\end{picture}\,\,. \pmn Here $n+1=6$ and $s=2$, hence, either
$e_\tau$ is even, or $e_\tau$ is divisible by $3$. Thus (see
(1.1)), either we have two Galois orbits (one of cardinality $2$
and another of cardinality $3$), or $7$ is a product of two prime
divisors $\rho_1$ and $\rho_2$ with ramifications $2$ and $3$,
respectively, i.e. $7=\rho_1^2\rho_2^3$. (The first case is
realized.) \end{ex}
\begin{ex} Let us consider $2^1$-maps with passport
$a_1^3a_2a_3b_1^2b_6$. The number of edges is $8=7+1$. There are 8
such maps:
\[\begin{picture}(350,50) \put(0,25){\circle*{3}}
\put(15,25){\circle{4}} \put(50,25){\circle*{3}}
\put(0,25){\line(1,0){13}} \qbezier(16,26)(33,50)(49,26)
\qbezier(16,24)(33,0)(49,24) \put(50,38){\circle{4}}
\put(50,47){\circle*{3}} \put(50,25){\line(0,1){11}}
\put(50,40){\line(0,1){7}} \put(50,12){\circle{4}}
\put(50,25){\line(0,-1){11}} \put(65,35){\circle{4}}
\put(50,25){\line(3,2){14}} \put(65,15){\circle{4}}
\put(50,25){\line(3,-2){14}}

\put(85,25){\circle*{3}} \put(100,25){\circle{4}}
\put(135,25){\circle*{3}} \put(85,25){\line(1,0){13}}
\qbezier(101,26)(118,50)(134,26) \qbezier(101,24)(118,0)(134,24)
\put(135,38){\circle{4}} \put(135,3){\circle*{3}}
\put(135,25){\line(0,1){11}} \put(135,10){\line(0,-1){7}}
\put(135,12){\circle{4}} \put(135,25){\line(0,-1){11}}
\put(150,35){\circle{4}} \put(135,25){\line(3,2){14}}
\put(150,15){\circle{4}} \put(135,25){\line(3,-2){14}}

\put(170,25){\circle*{3}} \put(185,25){\circle{4}}
\put(220,25){\circle*{3}} \put(170,25){\line(1,0){13}}
\qbezier(186,26)(203,50)(219,26) \qbezier(186,24)(203,0)(219,24)
\put(220,38){\circle{4}} \put(220,25){\line(0,1){11}}
\put(220,12){\circle{4}} \put(220,25){\line(0,-1){11}}
\put(235,35){\circle{4}} \put(220,25){\line(3,2){14}}
\put(247,43){\circle*{3}} \put(236,35){\line(3,2){11}}
\put(235,15){\circle{4}} \put(220,25){\line(3,-2){14}}

\put(270,25){\circle*{3}} \put(285,25){\circle{4}}
\put(320,25){\circle*{3}} \put(270,25){\line(1,0){13}}
\qbezier(286,26)(303,50)(319,26) \qbezier(286,24)(303,0)(319,24)
\put(320,38){\circle{4}} \put(320,25){\line(0,1){11}}
\put(320,12){\circle{4}} \put(320,25){\line(0,-1){11}}
\put(335,35){\circle{4}} \put(320,25){\line(3,2){14}}
\put(335,15){\circle{4}} \put(320,25){\line(3,-2){14}}
\put(347,7){\circle*{3}} \put(336,15){\line(3,-2){11}}
\end{picture}\]
\[\begin{picture}(310,60) \put(0,30){\circle{4}}
\put(35,30){\circle*{3}} \qbezier(1,31)(18,55)(34,31)
\qbezier(1,29)(18,5)(34,29) \put(35,43){\circle{4}}
\put(35,30){\line(0,1){11}} \put(23,55){\circle*{3}}
\put(34,44){\line(-1,1){11}} \put(47,55){\circle*{3}}
\put(36,44){\line(1,1){11}} \put(50,40){\circle{4}}
\put(50,20){\circle{4}} \put(35,17){\circle{4}}
\put(35,30){\line(3,2){14}} \put(35,30){\line(3,-2){14}}
\put(35,30){\line(0,-1){11}}

\put(80,30){\circle{4}} \put(115,30){\circle*{3}}
\qbezier(81,31)(98,55)(114,31) \qbezier(81,29)(98,5)(114,29)
\put(115,43){\circle{4}} \put(115,30){\line(0,1){11}}
\put(130,40){\circle{4}} \put(130,20){\circle{4}}
\put(115,17){\circle{4}} \put(115,30){\line(3,2){14}}
\put(115,30){\line(3,-2){14}} \put(115,30){\line(0,-1){11}}
\put(103,5){\circle*{3}} \put(114,16){\line(-1,-1){11}}
\put(127,5){\circle*{3}} \put(116,16){\line(1,-1){11}}

\put(160,30){\circle{4}} \put(195,30){\circle*{3}}
\qbezier(161,31)(178,55)(194,31) \qbezier(161,29)(178,5)(194,29)
\put(195,43){\circle{4}} \put(195,30){\line(0,1){11}}
\put(210,40){\circle{4}} \put(210,20){\circle{4}}
\put(195,17){\circle{4}} \put(195,30){\line(3,2){14}}
\put(195,30){\line(3,-2){14}} \put(195,30){\line(0,-1){11}}
\put(210,50){\circle*{3}} \put(210,42){\line(0,1){8}}
\put(220,40){\circle*{3}} \put(212,40){\line(1,0){8}}

\put(250,30){\circle{4}} \put(285,30){\circle*{3}}
\qbezier(251,31)(268,55)(284,31) \qbezier(251,29)(268,5)(284,29)
\put(285,43){\circle{4}} \put(285,30){\line(0,1){11}}
\put(300,40){\circle{4}} \put(300,20){\circle{4}}
\put(285,17){\circle{4}} \put(285,30){\line(3,2){14}}
\put(285,30){\line(3,-2){14}} \put(285,30){\line(0,-1){11}}
\put(300,10){\circle*{3}} \put(300,18){\line(0,-1){8}}
\put(310,20){\circle*{3}} \put(302,20){\line(1,0){8}}
\end{picture}\] Here $n+1=5$ and $s=1$, hence,  either
$e_\tau$ is divisible by 3, or --- by $5$. Thus (see (1.1)),
either we have two Galois orbits (one of cardinality $3$ and
another of cardinality $5$), or $7$ is a product of two prime
divisors $\rho_1$ and $\rho_2$ with ramifications $3$ and $5$,
respectively, i.e. $7=\rho_1^3\rho_2^5$. (The second case is
realized.) \pmn In normalized model, where white vertex of degree
3 is at zero, black vertex of degree 6 is at 1 and white vertex of
degree 2 is at point $a$ we have the following equation on $a$:
$$15a^8-7\cdot 24a^7+7\cdot 121a^6-7\cdot 360a^5+7^2\cdot 99a^4
-7\cdot 880a^3+7^2\cdot 102a^2-7^2\cdot 48a+7^2=0.$$ Let
$x=v_\rho(a)$, then in this expression there are 3 terms that may
have the minimal valuation: $15a^8$ with valuation $8x$, $7\cdot
880a^3$ with valuation $3x+e$ and $49$ with valuation $2e$. At
least two terms must have the minimal valuation.
\begin{itemize}
    \item $8x=3x+e \text{ minimal } \Rightarrow x=e/5,\,
    2e>3x+e,\,e=5$.
    \item $8x=2e \text{ minimal } \Rightarrow x=e/4,\, 3x+e<2e$,
    contradiction.
    \item $3x+e=2e \text{ minimal }\Rightarrow x=e/3,\, 8x>2e,\,
    e=3$.
\end{itemize} Thus, it again follows that $7=\rho_1^3\rho_2^5$.
\end{ex}
\pbn

\hspace{5mm}
\begin{center}{\Large Supplement}\end{center}
\pbn The difference between normalized and canonical models can be
illustrated on example of their Julia sets (see \cite{Falc} or
\cite{Peit}). Let us consider two maps
\[\begin{picture}(210,55) \put(10,10){\circle*{3}}
\put(30,30){\circle{4}} \qbezier(10,10)(20,0)(30,10)
\qbezier(30,10)(40,20)(31,29) \qbezier(10,10)(0,20)(10,30)
\qbezier(10,30)(20,40)(29,31) \put(30,50){\circle*{3}}
\put(30,32){\line(0,1){18}} \put(60,30){\circle*{3}}
\put(32,30){\line(1,0){28}} \put(75,15){\circle{4}}
\put(75,45){\circle{4}} \put(60,30){\line(1,1){14}}
\put(60,30){\line(1,-1){14}}

\put(110,30){and}

\put(145,5){\circle{4}} \put(160,20){\circle*{3}}
\put(160,50){\circle*{3}} \put(175,35){\circle{4}}
\put(146,6){\line(1,1){28}} \put(160,50){\line(1,-1){14}}
\put(195,35){\circle*{4}} \put(185,37){\oval(20,20)[t]}
\put(185,33){\oval(20,20)[b]} \put(210,35){\circle{4}}
\put(195,35){\line(1,0){13}}\end{picture}\] with passport
$a_1^2a_4b_1b_2b_3$. \pmn In normalized model white vertex of
degree 4 is at $0$, black vertex of degree 3 is at $1$ and 0 and 1
are critical values. Then here are two attracting fixed points ---
$0$ and $1$. In canonical model there is only one attracting fixed
point --- $0$. Let $O$ be the union of small neighborhoods of
attracting fixed points and a neighborhood of infinity. Points
that come into $O$ in $\leqslant 5$ steps are white, in $6$ or $7$
steps --- yellow, in $8$ or $9$ steps --- red. Other points (among
them points of Julia set) are blue. \pbn
\begin{center}{\Large The first map}\end{center}
\pbn
$$\begin{array}{ccc}
\includegraphics[width=6cm,height=4cm]{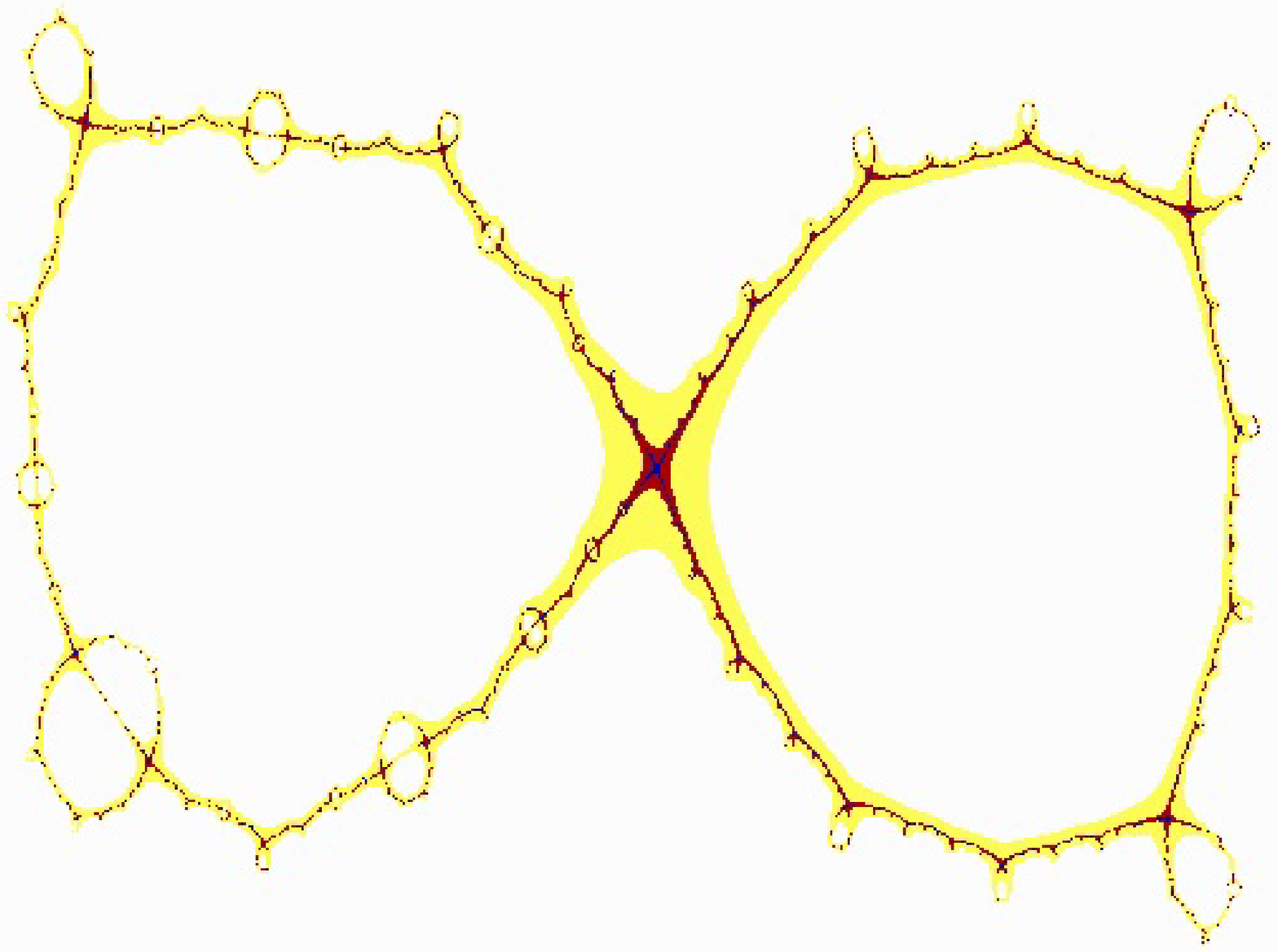} &&
\includegraphics[width=6cm,height=4cm]{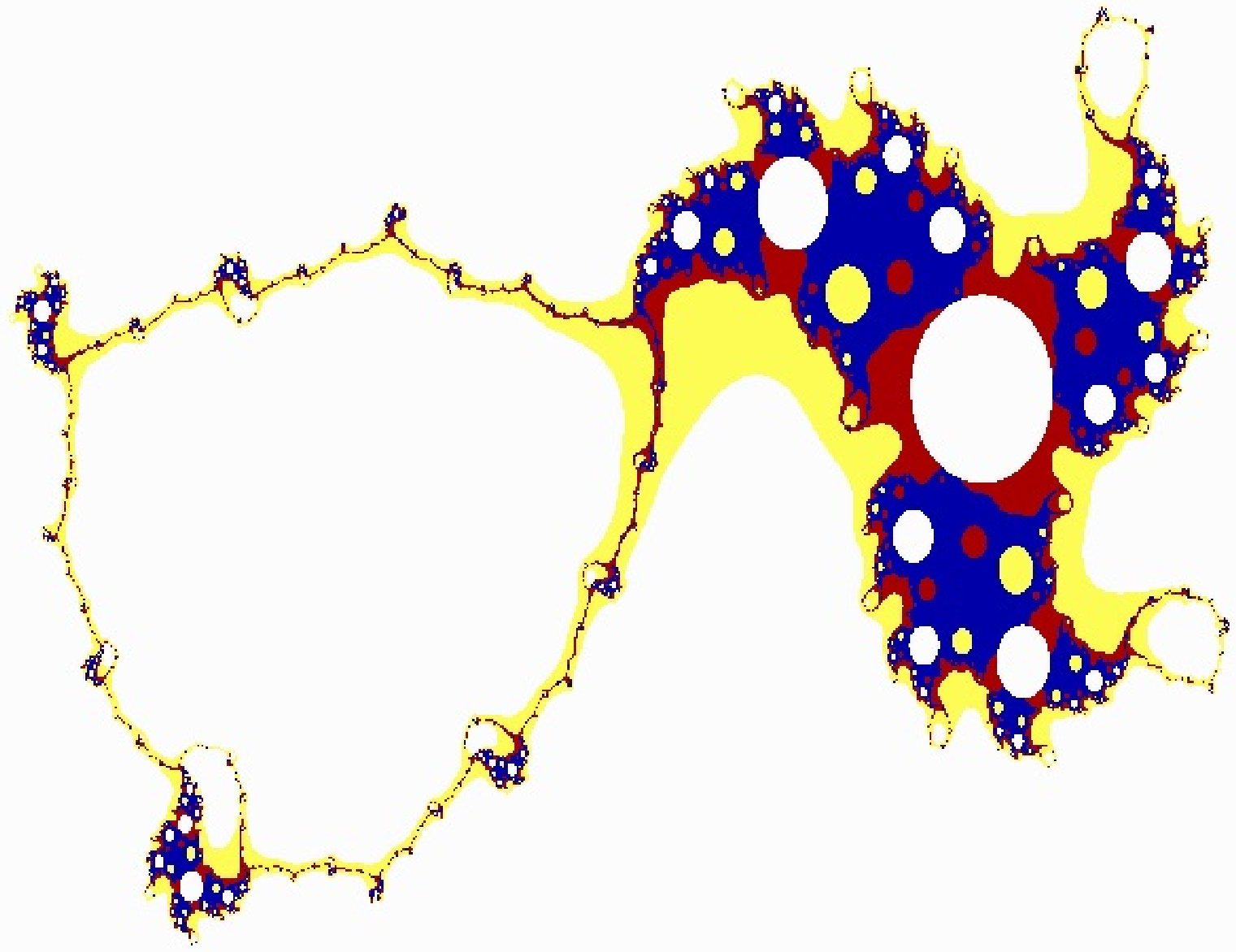}\\ \\
\text{Normalized model}&& \text{Canonical model}
\end{array}$$ \pbn
\begin{center}{\Large The second map}\end{center}
\pbn
$$\begin{array}{ccc}
\includegraphics[width=6cm,height=4cm]{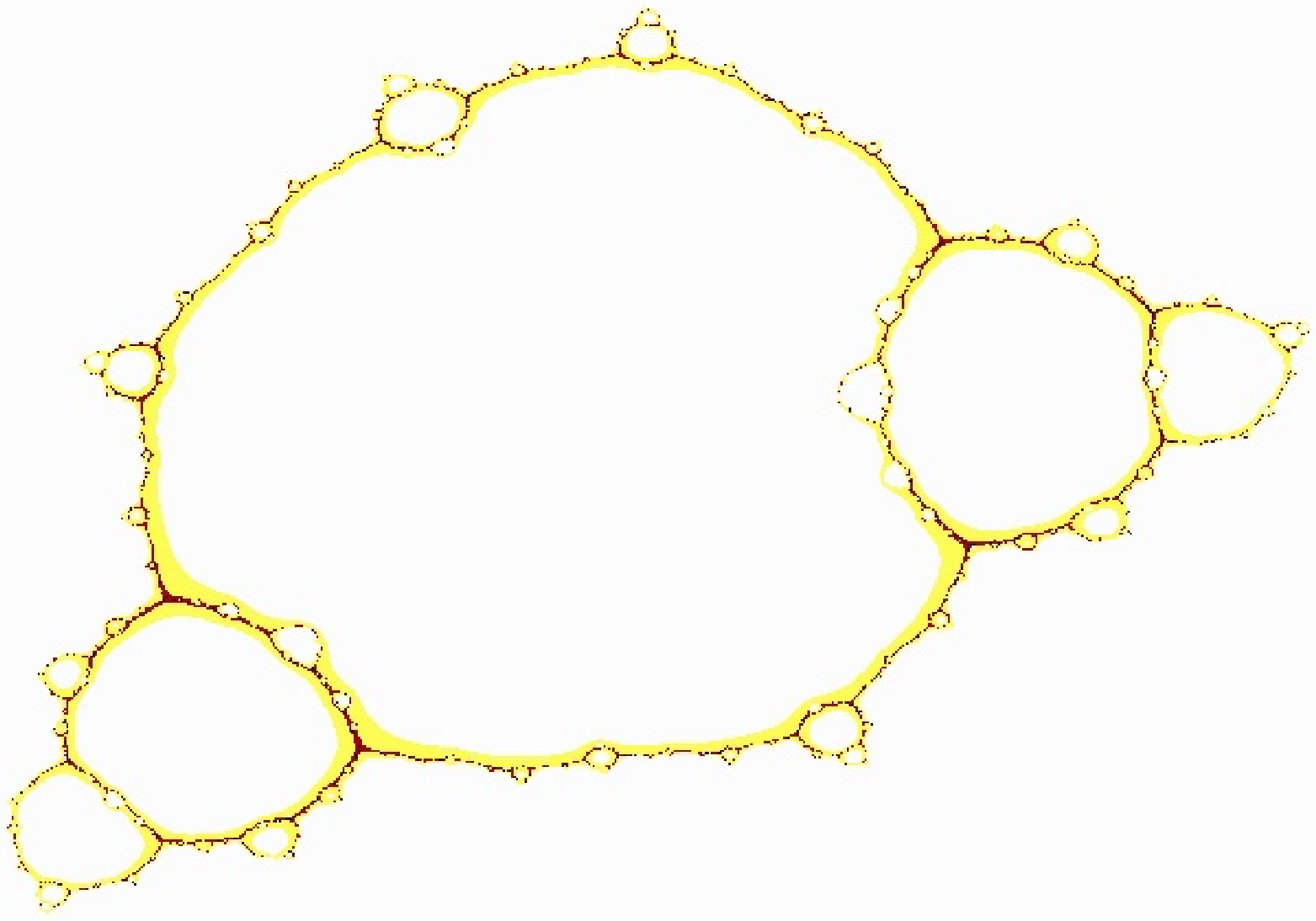} &&
\includegraphics[width=6cm,height=4cm]{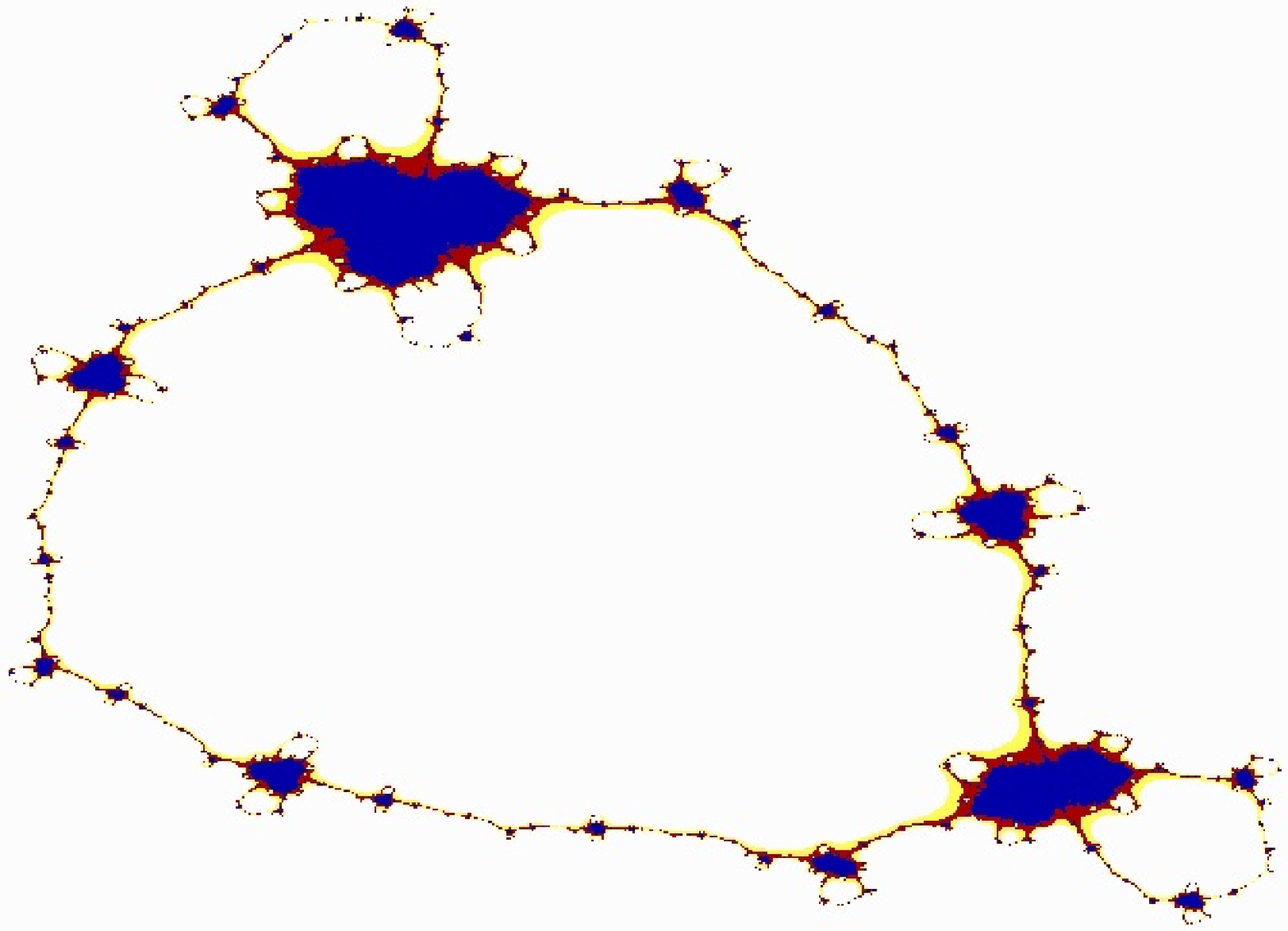}\\ \\
\text{Normalized model}&& \text{Canonical model}
\end{array}$$
\vspace{1cm}


\begin{thebibliography}{9}
\bibitem{Bor} Z.I. Borevich I.R. Shafarevich, {\it Number theory},
Academic Press, 1986.
\bibitem{Falc} K. Falconer, {\it Fractal geometry}, Wiley, 2003.
\bibitem{Zv} S.K. Lando, A.K. Zvonkin, {\it Graphs on surfaces and
their applications}, Springer, 2004.
\bibitem{Peit} H.-O. Peitgen, P.H. Richter, {\it The beauty of
fractals}, Springer-Verlag, 1986.
\bibitem{Goul} I.P. Goulden, D.M. Jackson, {\it The combinatorial
relationship between trees, cacti and certain connection
coefficients for the symmetric group}, European J. Combin.,
1992(13), 357-365.
\bibitem{Zapponi} L. Zapponi, {\it The arithmetic of prime degree trees},
Int. Math. Res. Notices, 2002(4), 211-219.
\end{thebibliography}
\end{document}